\theoremstyle{plain}
\newtheorem{theorem}{Theorem}
\newtheorem{lemma}{Lemma}
\newtheorem{proposition}{Proposition}
\newtheorem{corollary}{Corollary}
\newtheorem{definition}{Definition}
\theoremstyle{definition}
\newtheorem{example}{Example}
\newtheorem{algorithm}{Algorithm}
\begin{document}

\title{Maximum arrangements of nonattacking kings on the $2n\times 2n$ chessboard}
\author{Tricia Muldoon Brown\\
Georgia Southern University}
\date{}

\maketitle

\begin{abstract} 
To count the number of maximum independent arrangements of $n^2$ kings on a $2n\times 2n$ chessboard, we build a $2^n \times (n+1)$ matrix whose entries are independent arrangements of $n$ kings on $2\times 2n$ rectangles.  Utilizing upper and lower bound functions dependent of the entries of the matrix, we recursively construct independent solutions, and provide a straight-forward formula and algorithm.\\

\noindent \textbf{Keywords}: chess, independence, kings, nonattacking\\
\textbf{MSC}: 05A15, 05-04
\end{abstract}


\section{Introduction}\label{sec_introduction}

The problem of finding and counting the number of independent, also called nonattacking, arrangements of pieces on a chessboard is a long-established problem in mathematics and computer science.  Many variations have been studied including modifications of traditional pieces and of board size and shape.  In particular, we will be concerned here with maximum independent arrangements on a square chessboard where arrangements of pieces are always fixed, that is, rotations and reflections are considered distinct.   For some traditional chess pieces, bishops, rooks, knights, and pawns, both the size of a maximum independent set and the number of distinct maximal independent arrangements are known.  For the other pieces, kings and queens, only partial results are known.  For those interested in these kinds of questions, books by Dudeney~\cite{Dudeney}, Kraitchik~\cite{Kraitchik}, Madachy~\cite{Madachy}, Watkins~\cite{Watkins}, and Yaglom and Yaglom~\cite{Yaglom_Yaglom} provide good references, and Table~\ref{tab_counts} illustrates the currently known enumerative results.

\begin{table}[ht]
\centering
\scalebox{0.85}{
\begin{tabular}{|c|c|c|c|c|c|c|}
\hline
 & Kings & Queens & Bishops & Knights & Rooks & Pawns\\
\hline
Number of pieces&&&&&&\\
in a maximum & $\big\lceil \frac{n}{2} \big\rceil^2$ & $n$ & $2n-2$ & $\big\lceil \frac{n}{2} \big\rceil^2 + \big\lfloor \frac{n}{2} \big\rfloor ^2$ & $n$ &$n \big\lceil \frac{n}{2} \big\rceil$\\
arrangement&&&&&&\\
\hline
Number of&  1 if $n$ is odd,& unknown if  & & 1 if $n$ is odd, & & 2 if $n$ is odd,\\
maximum&unknown if& $n>27$ & $2^n$ & 2 if $n$ is even & $n!$ & ${n\choose n/2}^2$ if $n$ is even\\
arrangements    &$n>26$ is even & & & & & \\
\hline
\end{tabular}
}
\caption{Enumerative results for maximum independent sets placed on an $n\times n$ chessboard where $n>1$}
\label{tab_counts}
\end{table}

For the still-open questions, it is known that the maximum number of nonattacking queens on an $n\times n$ chessboard is $n$, but counting the number of these arrangements is a famously difficult question as can be the many variations on this problem. (See a survey paper by Bell and Stevens~\cite{Bell_Stevens} on this topic.)  However, here we wish to consider questions of maximum arrangements of nonattacking kings.  First, the maximum size is easily found.  We observe that every $2\times 2$ square on the chessboard may contain at most one king, so the upper bound on the number of kings is $\lceil \frac{n}{2} \rceil ^2$.  But this number can be achieved for both odd- and even-length boards by alternating kings in both rows and columns, as shown in Figure~\ref{fig_kingssolutions}.

\begin{figure}[ht]
\begin{center}
\scalebox{0.9}{\includegraphics{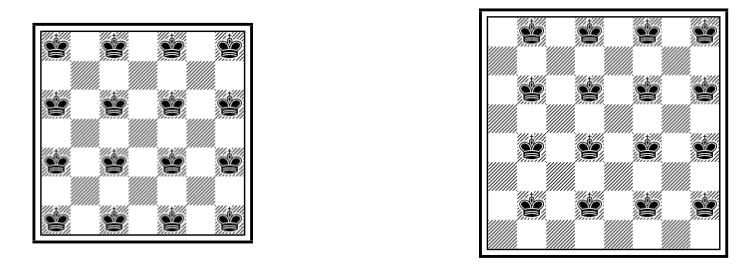}}
\end{center}
\caption{Maximum arrangements of 16 kings on $7\times 7$ and $8\times 8$ chessboards}
\label{fig_kingssolutions}
\end{figure}

Determining the number of distinct maximum independent arrangements in the case that $n$ is odd, is also straight-forward.  The arrangement placing kings in alternating positions and alternating rows as illustrated in Figure~\ref{fig_kingssolutions} is unique.  To see this, observe that every $2\times n$ strip and every $n\times 2$ strip must contain exactly $\lceil \frac{n}{2} \rceil$ kings in order to achieve the maximum.  Because the kings necessarily must at least be placed in alternating rows or columns, the only ways to do this is to place a king in the top row or leftmost column and alternate to the bottom row or rightmost column. Thus kings must appear in all four corners and in alternating rows and columns.

Counting maximum arrangements of independent kings on an even-length $2n\times 2n$ chessboard is not so simple as there is more latitude for king placement in this case.  Figure~\ref{fig_evenkings} illustrates some of these arrangements.  As we will now only be considering even-length boards, note that for the rest of the discussion $n$ will always refer to the half-length of the even-length chessboard.

\begin{figure}[ht]
%
%
\begin{center}
\scalebox{0.9}{\includegraphics{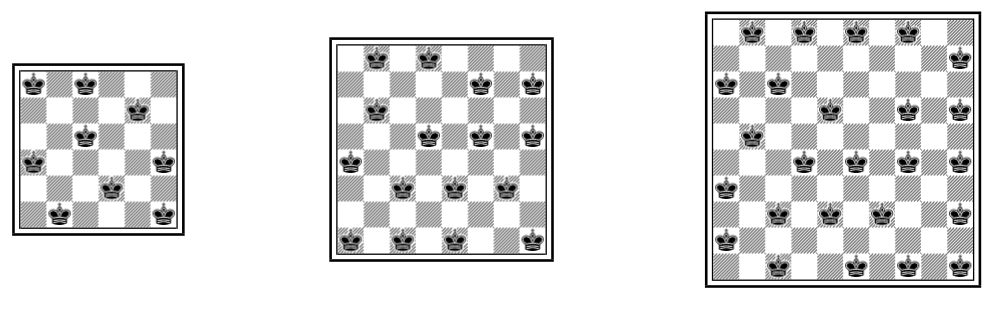}}
\end{center}
\caption{Examples of maximum arrangements of nonattacking kings on chessboards with even-lengths 6, 8, and 10}
\label{fig_evenkings}
\end{figure}

The question of enumerating maximum independent arrangements of kings on the even-length $2n\times 2n$ chessboard has been studied by Knuth~\cite{Knuth} in an unpublished work and later by Wilf~\cite{Wilf}, and then Larsen~\cite{Larsen} who gives an asymptotic approximation.  Entry \href{https://oeis.org/A018807}{A018807} in the Online Encyclopedia of Integer Sequences~\cite{OEIS} contributes counts up to $n=26$, and Kot\v{e}\v{s}ovec~\cite{Kotesovec} also provides enumerative results in his extensive book on independent chessboard arrangements.  Variations have been studied by Calkin et. al.~\cite{Calkin_etal1, Calkin_etal2} who consider all non-maximum arrangements, while Abramson and Moser~\cite{Abramson_Moser} count arrangements of $n$ independent kings with exactly one king in each row and column.

For the original question, Knuth's technique was to partition the chessboard into $2\times 2$ squares in such a way that would generate two vectors of integers.  He then sets up a graph where directed paths with certain restrictions could be used to find maximum independent kings solutions.  Wilf's strategy is to partition the chessboard into $2n\times 2$ columns, but then uses matrix multiplication of the \textit{transfer matrix} to identify solutions.  Algorithmically, this requires construction of the transfer matrix and repeated matrix multiplication on the $2^n (n+1) \times 2^n (n+1)$ matrix.  Here, we will use a smaller $2^n \times (n+1)$ matrix and apply techniques similar to those used in an earlier work by the author~\cite{TMB_pawns} on arrangements of independent pawns to find a formula to enumerate the number of arrangements of $n^2$ kings.  The complexity is still exponential, but smaller with a worst-case estimate of $O(3^n n^3)$.

\section{Counting nonattacking arrangements}
We begin similarly to Wilf~\cite{Wilf} by constructing a matrix which we will call $M_{2n}$ whose entries are all possible arrangements of $n$ independent kings on a $2\times 2n$ rectangle.  First, given such $2\times 2n$ rectangle, we partition the rectangle into $n$ $2\times 2$ squares.  From left to right, index each of these squares with the integers $1, 2, \ldots, n$.  We note, each $2\times 2$ square must contain exactly one king so that the set of kings placed on the rectangle is both non-attacking and maximum.  To determine the placement of the king, we need to know if the king is in the top or bottom row of the square and if the king is in the left or right column.  Thus we associate any $2\times 2n$ arrangement with a subset $A \subseteq [n]$ and an index $1\leq k \leq n+1$ where $A$ equal the set of indices of the $2\times 2$ squares that have their king in the top row of the square and $k$ is the index of the square such that all squares with index less than $k$ have a left king and all squares with an index greater than or equal to $k$ have a right king.  We use the $2^n$ subsets to index the rows of the matrix $M_{2n}$ and the values $1\leq k \leq n+1$ to index the columns.  The $2\times 2n$ rectangles are arranged from left to right in increasing order by the number of kings which appear in the left column of their $2\times 2$ square in the partition of the rectangle.  Specifically, the column index is the index of the leftmost square containing a king in the right column, and also is one plus the number of such kings appearing in a left column.  See Figure~\ref{3matrix} for an example of this matrix in the case $n=3$.

\begin{figure}[ht]
\begin{center}
\scalebox{0.9}{\includegraphics{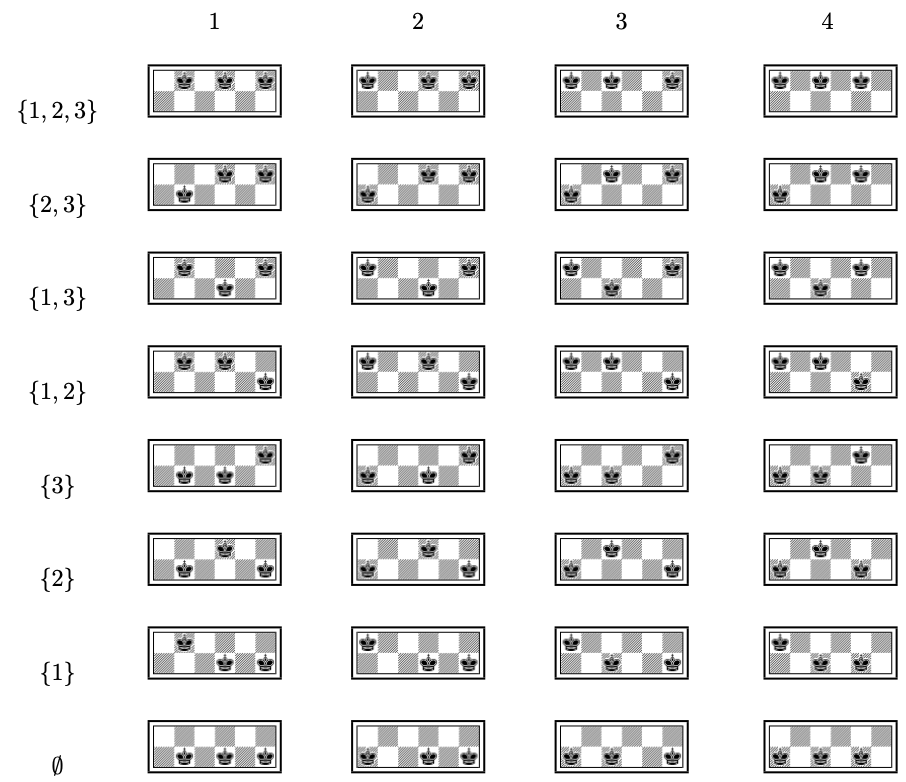}}
\end{center}
\caption{Entries in the matrix $M_6$}
\label{3matrix}
\end{figure}

\begin{lemma}
Every independent arrangement of $n$ kings on a $2\times 2n$ rectangle appears exactly once as an entry in the matrix $M_{2n}$.
\end{lemma}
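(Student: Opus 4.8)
The plan is to establish a bijection between independent arrangements of $n$ kings on a $2 \times 2n$ rectangle and the pairs $(A, k)$ with $A \subseteq [n]$ and $1 \leq k \leq n+1$ that index the entries of $M_{2n}$.

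First I would verify that the construction described before the lemma does give a well-defined map from arrangements to pairs $(A,k)$. Given any independent maximum arrangement on the rectangle, each of the $n$ indexed $2\times 2$ squares contains exactly one king (this is the maximum/independence observation stated in the text), so the vertical position of each king is determined: I let $A$ be the set of indices whose king sits in the top row. This is manifestly a well-defined subset of $[n]$. The harder part of well-definedness is the column index $k$: I must argue that the set of squares with a king in the left column and the set with a king in the right column are separated by a single threshold, so that $k$ can be defined as the index of the leftmost square with a right king (or $n+1$ if there is none). This is exactly where the nonattacking condition does the real work, so I expect this to be the main obstacle.

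To handle that obstacle, I would show that in any independent arrangement no square with a right king can immediately precede, in index, a square with a left king. Suppose square $i$ has its king in the right column and square $i+1$ has its king in the left column. These two kings occupy horizontally adjacent columns of the rectangle (the right column of square $i$ and the left column of square $i+1$ are consecutive columns of the $2\times 2n$ board), so they attack each other unless they differ by at least two rows — but on a two-row board two kings in adjacent columns always attack each other regardless of row. This contradicts independence. Hence once a right king appears at some index, every later square must also have a right king; equivalently the left-king indices form an initial segment $\{1, \dots, k-1\}$ and the right-king indices form the complementary final segment $\{k, \dots, n\}$. This justifies defining $k$ as claimed and shows $1 \le k \le n+1$, with $k = n+1$ precisely when all kings are left kings and $k=1$ when all are right kings.

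Finally I would check that the map is a bijection by exhibiting its inverse. Given any pair $(A,k)$, I reconstruct the arrangement by placing, in square $i$, a king in the top row if $i \in A$ and the bottom row otherwise, and in the left column if $i < k$ and the right column if $i \ge k$. I must confirm this reconstruction is genuinely nonattacking: kings in the same square are fine since there is only one per square, and kings in adjacent squares $i, i+1$ can attack only when the king of square $i$ is in its right column and the king of square $i+1$ is in its left column; but by the definition of $k$ via the threshold, a right king at index $i$ forces $i \ge k$, hence $i+1 > k$ forces a right king at $i+1$ as well, so the dangerous configuration never arises. This shows the reconstruction lands in the set of valid arrangements and is a two-sided inverse of the original map, completing the bijection. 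Since distinct pairs $(A,k)$ index distinct matrix entries and every pair occurs exactly once as an entry of $M_{2n}$, each arrangement appears exactly once, as claimed.
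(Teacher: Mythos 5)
Your proof is correct and follows essentially the same route as the paper: both arguments hinge on the observation that a right king in square $i$ immediately followed by a left king in square $i+1$ places two kings in horizontally adjacent columns of a two-row board, contradicting independence, so the left/right pattern must be a threshold indexed by $k$. Your version is simply a more detailed writeup --- you explicitly verify that $k$ is well defined and that the reconstruction from $(A,k)$ is nonattacking, steps the paper compresses into a single sentence.
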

 
\begin{proof} 
By construction, the set of matrix entries is contained in the set of independent $n$-kings arrangements on $2\times 2n$ rectangles.  Further the matrix entries encompass all ways to places kings in the top or bottom row of each rectangle and all the ways to place kings in left or right columns so that all right kings appear to the left of all left kings.  Any arrangement that does not appear as a matrix entry would necessarily then have a right king appearing to the right of a left king, but this contradicts independence. Thus we have arrangement all possible arrangements of non-attacking kings on a $2\times 2n$ rectangle appearing in exactly one entry of the $2^n \times (n+1)$ matrix $M_{2n}$.  
\end{proof}

Our goal now is to vertically concatenate these rectangular arrangements to form allowable independent solutions.  The first observation is that for such a concatenation to be independent, the row index of the upper rectangle, must contain the row index of the lower rectangle.

\begin{lemma}\label{lemma_BsubA}
Given an independent arrangement of $n$ kings on the rectangle indexed by $(A,k)$ where $A\subseteq [n]$ and $1\leq k \leq n+1$, any independent arrangement on another $2\times 2n$ rectangle that can be concatenated below to form an independent arrangement of size $4\times 2n$ must be indexed by $B$ where $B\subseteq A$.
\end{lemma}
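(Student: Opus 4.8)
The plan is to translate the independence condition into a statement about which rows the kings occupy, and then argue by contraposition. First I would fix coordinates: place the upper rectangle in rows $1$ and $2$ (with row $1$ on top) and the lower rectangle in rows $3$ and $4$, numbering columns $1,\dots,2n$ so that the $2\times 2$ square with index $i$ occupies columns $2i-1$ and $2i$. Since each rectangle is already independent on its own, the only pairs of kings that could possibly attack across the seam are a king sitting in row $2$ (the bottom row of the upper rectangle) together with a king sitting in row $3$ (the top row of the lower rectangle): these are the only two rows that become vertically adjacent once the rectangles are concatenated.

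Next I would unwind the indexing. In the upper rectangle $(A,k)$ the king in square $i$ lies in row $2$ exactly when $i\notin A$, while in the lower rectangle indexed by $B$ the king in square $i$ lies in row $3$ exactly when $i\in B$. So I would suppose, toward a contradiction, that $B\not\subseteq A$ and fix an index $i\in B\setminus A$. Then square $i$ contributes both a king of the upper rectangle in row $2$ and a king of the lower rectangle in row $3$.

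The heart of the argument, and the single point worth checking with care, is that these two kings must attack one another regardless of the column indices governing left/right placement. Both kings lie inside the single $2\times 2$ block spanning columns $2i-1$ and $2i$, so their column coordinates differ by at most $1$, while their rows differ by exactly $1$. Hence the two kings are at Chebyshev distance $1$, either vertically or diagonally adjacent, and therefore attack, contradicting the independence of the $4\times 2n$ arrangement. This forces $B\subseteq A$, as claimed. I anticipate no genuine obstacle beyond verifying this column bound; the left/right column placement is irrelevant here precisely because it can only shift a king within its own $2\times 2$ block, and thus can never increase the column gap beyond $1$.
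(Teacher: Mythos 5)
Your proof is correct and takes essentially the same approach as the paper: argue by contraposition, pick $i \in B \setminus A$, and observe that square $i$ then holds a bottom-row king of the upper strip and a top-row king of the lower strip, which attack regardless of left/right placement. Your explicit coordinate setup and Chebyshev-distance check merely make rigorous what the paper asserts in one line (``no matter the placement, left or right, \ldots the two kings may attack''), so there is nothing to add.
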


\begin{proof}To the contrary, if $B \nsubseteq A$, then there exists an index $i$ that is in $B$ and is not in $A$.  This indicates a king in the top row of the strip indexed by $B$ and a king in the bottom row of the strip indexed by $A$ in square $i$.  No matter the placement, left or right, for each king, the two kings may attack.
\end{proof}

This condition is necessary but not sufficient.  For example the strip in position $(\{2\}, 2)$ may not be placed independently under the strip indexed by $(\{1,2\}, 4)$ despite $\{2\} \subseteq \{1,2\}$.  This is because the king in the top row of strip $(\{2\},2)$ in square $2$ has been pushed to the right, so it conflicts with the king in the bottom row of the strip $(\{1,2\},4)$ in square $3$.  We need a condition on the left and right positions of the kings.  To find this, we define two functions dependent on $A$, $B$, and $k$.

\begin{definition}
\begin{enumerate}
\item Let $p(A,B,k) = \max \{ \{i\in [n+1] : i<k , i \notin A,\mbox{ and }i-1 \in B\} \cup \{1\}\}$.
\item Let $q(A,B,k)= \min \{ \{ i \in [n+1]: i >k, i \in B, \mbox{ and } i-1 \notin A\} \cup \{n+1\}\}$
\end{enumerate}
\end{definition}

\begin{proposition}\label{prop_pandq}
Given a $2\times 2n$ arrangement of $n$ independent kings indexed by $(A,k)$, the $2\times 2n$ independent arrangements that maybe independently concatenated below $(A,k)$ are exactly those which are indexed by $(B,i)$ where $B\subseteq A$ and $p(A,B,k) \leq i \leq q(A,B,k)$.
\end{proposition}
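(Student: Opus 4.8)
The plan is to reduce the concatenation condition to a purely local analysis of where kings can attack across the seam between the two rectangles. Since each rectangle already carries an independent arrangement, the only new attacks when we stack $(B,i)$ beneath $(A,k)$ occur between the bottom row of the top rectangle (global row $2$) and the top row of the bottom rectangle (global row $3$). A king of the top rectangle lies in row $2$ exactly at squares $j \notin A$, and a king of the bottom rectangle lies in row $3$ exactly at squares $j \in B$. Because $B \subseteq A$ by Lemma~\ref{lemma_BsubA}, no single square $j$ can simultaneously host both kinds of king, so every potential seam attack is between kings in distinct squares.

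Next I would pin down which pairs of distinct squares can actually attack. Square $j$ occupies columns $2j-1$ and $2j$, so kings in squares whose indices differ by at least $2$ sit at horizontal distance at least $3$ and cannot attack, while kings in the adjacent rows $2$ and $3$ attack precisely when their columns differ by at most $1$. Thus attacks are confined to consecutive squares $s,s+1$, and there are two orientations to examine: (a) a row-$2$ king of $(A,k)$ in square $s$ together with a row-$3$ king of $(B,i)$ in square $s+1$; and (b) the mirror image, a row-$2$ king in square $s+1$ with a row-$3$ king in square $s$. A short check of the four left/right possibilities in each orientation shows that (a) produces an attack exactly when the upper king is a right king ($s \ge k$) and the lower king is a left king ($s+1 < i$), while (b) produces an attack exactly when the upper king is a left king ($s+1 < k$) and the lower king is a right king ($s \ge i$); in every other left/right combination the two columns differ by $2$ or more.

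I would then translate each of these two attack criteria into the functions $p$ and $q$. Reindexing orientation (a) by $t = s+1$, an attack exists if and only if there is some $t \in B$ with $t > k$, $t-1 \notin A$, and $t < i$; since $q(A,B,k)$ is by definition the least such $t$ (with $n+1$ adjoined to handle the empty case), this occurs exactly when $i > q(A,B,k)$. Reindexing orientation (b) by $r = s+1$, an attack exists if and only if there is some $r \notin A$ with $r < k$, $r-1 \in B$, and $r > i$; since $p(A,B,k)$ is the greatest such $r$ (with $1$ adjoined), this occurs exactly when $i < p(A,B,k)$. Hence, given $B \subseteq A$, no seam attack occurs if and only if $p(A,B,k) \le i \le q(A,B,k)$.

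Finally I would assemble the two directions. Necessity of $B \subseteq A$ is Lemma~\ref{lemma_BsubA}, and the argument above shows that violating either bound on $i$ forces an attack, so the stated conditions are necessary; conversely, when $B \subseteq A$ and $p(A,B,k) \le i \le q(A,B,k)$, both attack criteria fail and the two rectangles concatenate independently, giving sufficiency. The main obstacle I anticipate is the bookkeeping in the middle two steps: I must verify the edge cases in which the index sets defining $p$ and $q$ are empty, so that $p = 1$ and $q = n+1$ render the respective bounds vacuous in agreement with the range $i \in [n+1]$, and I must ensure that the reindexing $t = s+1$ and $r = s+1$ aligns the inequalities exactly with the ``$i>k$'', ``$i<k$'', ``$i-1\in B$'', and ``$i-1\notin A$'' clauses in the definitions rather than off-by-one variants of them.
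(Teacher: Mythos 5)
Your proposal is correct and follows essentially the same route as the paper: both reduce the problem to attacks across the seam between global rows $2$ and $3$, observe that $B\subseteq A$ rules out same-square conflicts, classify the two adjacent-square attack orientations (your (a) and (b) are the paper's Type 2 and Type 1 conflicts), and translate them via the same reindexing into the bounds $i\le q(A,B,k)$ and $i\ge p(A,B,k)$. Your packaging of the extremal step as a direct min/max equivalence (attack iff $i>q$ or $i<p$) is a slightly cleaner substitute for the paper's monotonicity argument about moving across a row of $M_{2n}$, but it is not a genuinely different method.
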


\begin{proof}
Lemma~\ref{lemma_BsubA} forces $B\subseteq A$.  Then, note that locally for all $1< j \leq n+1$ the concatenation of two strips indexed by $A$ and $B$ will never have a king in square $j$ of $(A,k)$ and a king in square of $j$ of $(B,k)$ attacking each other, because either both kings are in the top row, both are in the bottom row, or the king in $(A,k)$ is in the top row while the king in $(B,i)$ is in the bottom row.  There are however two ways for a concatenation of rectangular strips $(A,k)$ and $(B,i)$ to have attacking kings in squares indexed some $j-1$ and $j$; either there is a right king in the top row of square $j-1$ in $(B,i)$ and a left king in the bottom row of square $j$ in $(A,k)$, or there is a right king in the bottom row of square $j-1$ in $(A,k)$ and a left king in the top row of square $j$ in $(B,i)$.  (See Figure~\ref{conflict}.)

\begin{figure}[ht]
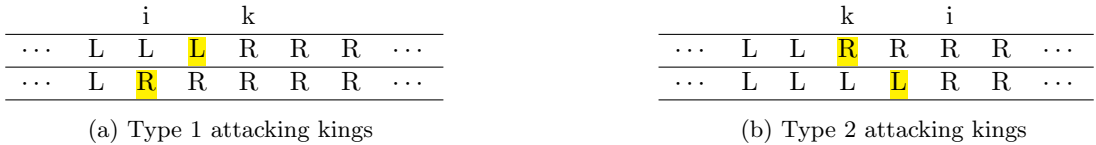

\begin{subfigure}{.5\textwidth}
\hspace{0.4in}
\begin{tabular}{cccccccc}
&&i&&k&&&\\
\hline
$\cdots$ &L&L&\hl{L}&R&R&R&$\cdots$\\
\hline
$\cdots$ &L&\hl{R}&R&R&R&R&$\cdots$\\
\hline
\end{tabular}
\caption{Type 1 attacking kings}\label{conflict1}
\end{subfigure}
\quad
\begin{subfigure}{.5\textwidth}
\hspace{0.4in}
\begin{tabular}{cccccccc}
&&&k&&i&&\\
\hline
$\cdots$ &L&L&\hl{R}&R&R&R&$\cdots$\\
\hline
$\cdots$ &L&L&L&\hl{L}&R&R&$\cdots$\\
\hline
\end{tabular}
\caption{Type 2 attacking kings}\label{conflict2}
\end{subfigure}
\caption{Possible attacks by arrangements of independent kings on two $2\times 2n$ rectangles}\label{conflict}
\end{figure}

Let us first consider how we can avoid the situation of an attack by a left king in the fixed strip $(A,k)$ and a right king in strip $(B,i)$ for some $i$.  We note, the largest value for $i$ that could generate a pair of attacking kings is $i=k-2$, as we can see from Figure~\ref{conflict1}, because for any larger values there cannot be a left king in $(A,k)$ that is immediately to the left of a right king in $(B,i)$.  So we will have an attacking pair in this case if the king in $(B,i)$ is in the top row and the king in $(A,k)$ is in the bottom row, that is, if
\begin{enumerate}[i.]
\item $i< k-1$, 
\item $i \in B$, and
\item  $i+1 \notin A$. 
\end{enumerate} 
In particular,  we look to the largest index over all values $i$ satisfying the criteria above, that is, over all $i$ where there is a pair of attacking kings of Type 1, because every index less than or to the left of this maximum would still have the attacking pair of kings with one king in the square indexed by that maximum and the other directly to the right.   This is because moving to the left in the matrix will only increase the number of right kings in the arrangement indexed by $B$.  We can then choose the index which is one greater than the maximum as the starting value for which all strips indexed by $B$ and any index greater than or equal to this maximum plus one will not have a Type 1 conflict.  If no such value exists, then all arrangements in row $B$ will not have a conflict with the arrangement $(A,k)$ and we set our lower bound to $1$. Using a change of variable where we send all $i+1$ to $i$, we can say the lower bound for non-attacking arrangements is:
\begin{equation*}
p(A,B,k) = \max \{ \{i\in [n+1] : i<k , i \notin A,\mbox{ and }i-1 \in B\} \cup \{1\}\}.
\end{equation*}

Similarly, we need to look at the situation of an attack by a right king in $(A,k)$ by a left king in $(B,i)$ for some $i$.  In Figure~\ref{conflict2}, we can see that the smallest value for an attacking pair would be $i=k+2$.  So to have an attacking pair of the second type, we need
\begin{enumerate}[i.]
\item  $i>k+1$, 
\item $i-1\in B$, and 
\item $i-2 \notin A$. 
\end{enumerate}
 After determining the minimum of these values $i$ where have pairs of attacking kings of Type 2, we set the upper bound to the minimum minus one as the last possible non-attacking arrangement when we move from left to the right across a row of $M_{2n}$ increasing the number of queens in left columns.  If no such value exists, then all arrangements of $B$ have no attacking pairs of the second type, and we can set the upper bound to $n+1$.  Again applying a change of variable sending all $i-1$ to $i$, we have
\begin{equation*}
q(A,B,k)= \min \{ \{ i\in [n+1]:i >k,  i \in B, \mbox{ and } i-1 \notin A\} \cup \{n+1\}\}.
\end{equation*}
\end{proof}

We illustrate these functions with the following example.

\begin{example}\label{ex_pandq}
Let $n=7$.  Let $A=\{1,2,5,7,8\}$,  $B= \{1,2,5,7\}$ and $k=4$.  We compute the upper and lower bounds.
\begin{align*}
p(A,B,4) &=  \max \{ \{i :i \notin A, i<4 \mbox{ and }i-1 \in B\} \cup \{1\}\}\\
&= \max \{ \{3\} \cup \{1\}\} = 3\\
q(A,B,4) &= \min \{ \{ i : i \in B, i >4 \mbox{ and } i-1 \notin A\} \cup \{n+1\}\}\\
&=\min \{\{5,7\} \cup \{8\}\}=5.
\end{align*}
Thus arrangements indexed by $B$ that may be concatenated below $(A,4)$ are exactly $(B,3)$, $(B,4)$, and $(B,5)$.  See Figure~\ref{example}.

\begin{figure}[ht]
%
%
%
%
\begin{center}
\scalebox{0.9}{\includegraphics{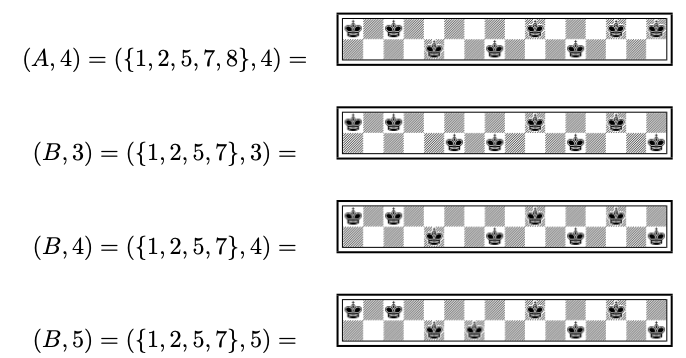}}
\end{center}
\caption{Example of independent arrangements $(\{1,2,5,7\},i)$ which may be concatenated independently below the independent arrangement $(\{1,2,5,7,8\},4)$}
\label{example}
\end{figure}
\end{example}

Now we are ready for the main result.

\begin{theorem}\label{theorem_kings}
Define a sequence of matrices $M^\ell$ with entries $\{m_{A,k}^{\ell}\}$ for $1\leq \ell \leq n$, $A\subseteq [n]$, and $1\leq k \leq n+1$ such that $m^1_{A,k} = 1$ for all pairs $(A,k)$ and for $1<\ell \leq n$
\[ m^{\ell}_{A,k} = \sum_{B\subseteq A} \sum_{i=p_(A,B,k)}^{q_(A,B,k)} m^{\ell -1}_{B,i}. \]

Then $K(n)$, the number of maximum arrangements of nonattacking kings on the $2n\times 2n$ chessboard, is
 \[K(n) = \sum_{A\subseteq [n]} \sum_{k=1}^ {n+1} m^{n}_{A,k}. \]
\end{theorem}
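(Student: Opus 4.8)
The plan is to exhibit a bijection between maximum independent king arrangements on the $2n \times 2n$ board and sequences of $n$ strip-arrangements that are pairwise concatenable, and then to show the recursion for $m^\ell_{A,k}$ counts such sequences. First I would partition the board into $n$ horizontal strips $S_1, \ldots, S_n$, where $S_j$ consists of rows $2j-1$ and $2j$ and is an instance of a $2 \times 2n$ rectangle. Partitioning each $S_j$ further into its $n$ constituent $2\times 2$ squares realizes the board as an $n \times n$ grid of $2\times 2$ squares; since independence forbids two kings in a single such square and there are exactly $n^2$ of them, a maximum (size $n^2$) arrangement must place exactly one king in every square. In particular each strip $S_j$ carries exactly $n$ kings and so, by the first Lemma, is recorded by a unique index $(A_j, k_j)$ appearing in $M_{2n}$.

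Next I would argue that global independence reduces to independence of adjacent strips. The decisive point is that a king in $S_j$ and a king in $S_{j'}$ with $|j - j'| \ge 2$ lie in rows whose difference is at least $3$, hence cannot attack, while kings within a single strip are already non-attacking because $(A_j, k_j)$ is a valid $2\times 2n$ arrangement. Therefore the whole arrangement is independent if and only if, for every $1 \le j < n$, the strip $(A_{j+1}, k_{j+1})$ may be concatenated independently below $(A_j, k_j)$. By Proposition~\ref{prop_pandq} this last condition is exactly $A_{j+1} \subseteq A_j$ together with $p(A_j, A_{j+1}, k_j) \le k_{j+1} \le q(A_j, A_{j+1}, k_j)$. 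This establishes a bijection between maximum arrangements and sequences $(A_1, k_1), \ldots, (A_n, k_n)$ of entries of $M_{2n}$ in which each consecutive pair satisfies the concatenation criterion.

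The remaining step is an induction on $\ell$ showing that $m^\ell_{A,k}$ equals the number of such admissible sequences of length $\ell$ whose top strip is $(A, k)$. The base case $\ell = 1$ holds because every pair $(A, k)$ indexes a valid strip, giving the single sequence counted by $m^1_{A,k} = 1$. For the inductive step, any admissible length-$\ell$ sequence with top $(A, k)$ is obtained by placing $(A, k)$ above an admissible length-$(\ell - 1)$ sequence whose top strip $(B, i)$ is concatenable below $(A, k)$; Proposition~\ref{prop_pandq} identifies the allowable $(B, i)$ as precisely those ranged over in the defining double sum, and the inductive hypothesis supplies $m^{\ell - 1}_{B, i}$ completions for each, yielding the stated recursion. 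Summing $m^n_{A,k}$ over all top strips $(A, k)$ then counts all admissible sequences of length $n$, which by the bijection equals $K(n)$.

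I expect the main obstacle to be the careful justification of the reduction to adjacent-strip compatibility, specifically verifying that no two kings in non-adjacent strips can attack and that local compatibility of every consecutive pair is genuinely sufficient, not merely necessary, for global independence. Once that locality argument is pinned down, the enumeration is a routine structural induction driven directly by Proposition~\ref{prop_pandq}.
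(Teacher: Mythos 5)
Your proposal is correct and follows essentially the same route as the paper: decompose the board into $n$ horizontal $2\times 2n$ strips, invoke Proposition~\ref{prop_pandq} for adjacent-strip compatibility, and induct on the number of strips so that $m^\ell_{A,k}$ counts height-$2\ell$ arrangements with top strip $(A,k)$. You are in fact somewhat more careful than the paper's own proof, which leaves implicit both the one-king-per-$2\times 2$-square pigeonhole argument and the locality claim that strips at distance at least two cannot interact (rows differing by at least $3$), so your explicit verification of these points is a welcome tightening rather than a deviation.
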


\begin{proof}
The sum of the entries of the matrix $M^1$ gives the number of nonattacking arrangements of $n$ kings on a $2\times 2n$ strip as each of the entries corresponds bijectively with a specific strip indexed by a subset $A\subseteq [n]$ and a value $1\leq k \leq n+1$.  Because $p(A,B,k)$ and $q(A,B,k)$ provide bounds for all valid arrangement which may be independently concatenated below $(A,k)$, by Proposition~\ref{prop_pandq} the double sum
\[m^2_{A,k}=\sum_{B\subseteq A} \sum_{i=p_(A,B,k)}^{q_(A,B,k)} m^{1}_{B,i}\]
counts all valid $4\times 2n$ maximum arrangements of nonattacking kings where the top $2\times 2n$ arrangement is $(A,k)$, that is, the entries $m_{A,k}^2$ correspond to the number of independent arrangements on $2\times 2n$ rectangles which may be concatenated below the arrangement $(A,k)$ while preserving independence.  Their sum thus is the total number of nonattacking arrangements of $2n$ kings on a $4\times 2n$ strip.  As this process is repeated, we see that a $2\ell \times 2n$ independent arrangement is found by concatenating a single independent arrangement on a $2\times 2n$ rectangle $(A,k)$ above and existing independent arrangement of the $2(\ell-1)\times 2n$ strip so that independence is also preserved along this concatenation.  Inductively, we know the number of $2(\ell-1)\times 2n$ arrangements with arrangement $(B,i)$ at the top is $m_{B,i}^{\ell-1}$, and as in the case where $\ell =2$ we utilize the upper and lower bound functions $p$ and $q$ to sum over precisely the arrangements which can be concatenated independently
\end{proof}

\begin{example}
Let $n=3$.

\[ m^1 = \begin{bmatrix}
1&1&1&1\\
1&1&1&1\\
1&1&1&1\\
1&1&1&1\\
1&1&1&1\\
1&1&1&1\\
1&1&1&1\\
1&1&1&1\\
\end{bmatrix}
\quad
 m^2 = \begin{bmatrix}
 32&32&32&32\\
 12&16&16&16\\
 14&14&14&14\\
 16&16&16&12\\
 7&7&8&8&\\
 6&8&8&6\\
 8&8&7&7\\
 4&4&4&4\\
 \end{bmatrix}
 \quad
 m^3 = \begin{bmatrix}
408&408&408&408\\
88&134&134&134\\
110&110&110&110\\
134&134&134&88\\
38&38&46&46\\
30&44&44&30\\
46&46&38&38\\
16&16&16&16\\
\end{bmatrix}
\]
 \[K(3) = \sum_{A\subseteq [3]} \sum_{k=1}^ {4} m^{3}_{A,k}=3600. \]
\end{example}

Of course, if we want to enumerate arrangements on a $2m \times 2n$ rectangular chessboard with $m\leq n$, we can simply utilize matrices $M^m$. We have the following corollary:

\begin{corollary}
The number of maximum arrangements of nonattacking kings on a $2m\times 2n$ rectangle for $m\leq n$ is given by the sum
\[K(n,m)= \sum_{A\subseteq [n]} \sum_{k=1}^{n+1} m^{m}_{A,k}.\]
\end{corollary}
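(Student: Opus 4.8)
The plan is to reuse the inductive content already established in the proof of Theorem~\ref{theorem_kings}. That argument does not merely prove the $\ell = n$ statement: by induction it shows, for every $1 \le \ell \le n$, that $m^\ell_{A,k}$ equals the number of maximum independent arrangements of $\ell n$ kings on a $2\ell \times 2n$ strip whose top $2\times 2n$ rectangle is the arrangement indexed by $(A,k)$. I would first make this stronger intermediate claim explicit, so that the corollary becomes simply the instance $\ell = m$ of it rather than requiring a fresh induction.

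Next I would justify why the horizontal-strip decomposition captures exactly the maximum arrangements on the $2m \times 2n$ board. Partition the board into its $m$ horizontal $2\times 2n$ strips. Each strip subdivides into $n$ disjoint $2\times 2$ squares, so it holds at most $n$ kings, giving the global bound $mn$; since this bound is attainable, a maximum arrangement must place exactly $n$ kings in every strip, and hence each strip is a maximum $n$-king arrangement, which therefore appears as a unique entry of $M_{2n}$ indexed by some $(A_j, k_j)$. Because kings attack only at Chebyshev distance one, two kings lying in strips $j$ and $j'$ can conflict only when $|j - j'| = 1$, so the global independence of the whole arrangement is equivalent to the pairwise independence of each consecutive pair of strips. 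By Proposition~\ref{prop_pandq}, strip $(A_{j+1}, k_{j+1})$ may be concatenated below $(A_j, k_j)$ precisely when $A_{j+1} \subseteq A_j$ and $p(A_j, A_{j+1}, k_j) \le k_{j+1} \le q(A_j, A_{j+1}, k_j)$. This yields a bijection between maximum $2m \times 2n$ arrangements and sequences $(A_1, k_1), \dots, (A_m, k_m)$ satisfying these consecutive constraints.

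Finally I would close the induction on the sequence length. The base case $m^1_{A,k} = 1$ records the single length-one sequence beginning at $(A,k)$, and the recursive step $m^\ell_{A,k} = \sum_{B \subseteq A} \sum_{i = p(A,B,k)}^{q(A,B,k)} m^{\ell-1}_{B,i}$ sums over all admissible choices of the second strip $(B,i)$ and, by the inductive hypothesis, over all ways to extend it to a concatenable sequence of length $\ell$. Taking $\ell = m$ and summing over all possible top strips gives $K(n,m) = \sum_{A \subseteq [n]} \sum_{k=1}^{n+1} m^m_{A,k}$; the hypothesis $m \le n$ is exactly what guarantees that the matrix $M^m$ lies within the range $1 \le \ell \le n$ for which it was defined.

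I expect the only genuine obstacle to be the verification that maximality forces exactly $n$ kings in each horizontal strip and that no independence constraint links non-adjacent strips; once these two facts are secured, the corollary follows from the theorem's induction essentially by re-reading it at $\ell = m$ in place of $\ell = n$.
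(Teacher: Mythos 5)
Your proposal is correct and matches the paper's route exactly: the paper states this corollary without separate proof, treating it as immediate from the induction in the proof of Theorem~\ref{theorem_kings}, whose inductive step already establishes your intermediate claim that $m^\ell_{A,k}$ counts the $2\ell\times 2n$ maximum independent arrangements with top strip $(A,k)$. The two details you flag and verify --- that maximality forces exactly $n$ kings in each horizontal $2\times 2n$ strip, and that only consecutive strips can contain mutually attacking kings --- are precisely the facts the paper uses implicitly (the second inside Proposition~\ref{prop_pandq} and the theorem's proof), so your write-up is a faithful, slightly more explicit rendering of the same argument.
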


Finally, Theorem~\ref{theorem_kings} provides the following algorithm for computing the values of $K(n)$.
\begin{algorithm}\label{algorithm}
\begin{enumerate}
\item For every triple $(A,B,k)$ such that $B\subseteq A \subseteq [n]$ and $1\leq k \leq n+1$, compute $p(A,B,k)$ and $q(A,B,k)$.
\item Define the matrix $M^1$ with rows indexed by subsets of $[n]$ and columns indexed by $1\leq k \leq n+1$ where each entry is set to 1.
\item For $2\leq \ell \leq n$, determine the entries in the matrix $M_{\ell}$ by summing over entries $m_{B,i}^{\ell -1}$ in the matrix $M^{\ell -1}$ where $B\subseteq A$ and $p(A,B,k) \leq i \leq q(A,B,k)$.
\item Sum the entries of $M^n$.
\end{enumerate}
\end{algorithm}

\section{Complexity}

We can estimate the computational complexity of Algorithm~\ref{algorithm}.   It is computationally cheap to determine the values of $p(A,B,k)$ and $q(A,B,k)$. Each of these functions run in polynomial time with bound $\mathcal{O}(n^2)$, that is, $2n+1$ ways to check the requirements, multiplied by $n$ for all possible values of $i$, plus $n$ to determine the minimum.  However, the difficulty is that the algorithm sums over all pairs of subsets $B\subseteq A \subseteq [n]$ in $\sum_{i=0}^n {n\choose i} 2^i = 3^n$ ways, providing an overall upper bound on the order of  $\mathcal{O}(3^n n^3)$.  We note, many of the entries in each matrix $M$ are repeated because of symmetry; for example columns indexed by $i$ and $n+2-i$ are equal, so some further modifications to this bound can be made.

It is of interest to compare this complexity to previous work.  As mentioned in Section~\ref{sec_introduction}, both Knuth and Wilf provided algorithms for computing the number of maximal arrangements on a $2m\times 2n$ chessboard.  First, Knuth proposes creating directed graph with vertices $(a,b,S)$ such that $0\leq a \leq m$, $0\leq b \leq n$, and $S\subseteq [m]$ where there is a directed edge from $(a, b,S)$ to $(a', b+1, T)$ if and only if $S\subset T$ and the following conditions are satisfied:
\begin{enumerate}[i.]
\item If $a< j <a'$ and $j\in S$, then $j+1\in T$.
\item If $a> j >a'$ and $j+1\in S$, then $j\in T$.
\end{enumerate}
Once the directed graph is established, one counts the number of directed paths of length $n$ to determine the number of maximal independent solutions.  One way to establish an upper bound on time complexity is to utilize the adjacency matrix for this directed graph.  After finding the matrix, we may raise it to the $n$th power to find the number of directed paths between pairs of vertices of length $n$.  Note, the $b$ in a triple describing a vertex is simply a placeholder as all paths of length $n$ will have the same sequence of middle coordinates.  Thus we look at pairs of vertices $(a, S)$ and $(a',T)$ where $S\subset T \subseteq [m]$.  For each of the $m^2 3^m$ pairs of vertices we check the condition given by Knuth and create the $(m+1)2^m \times (m+1)2^m$ adjacency matrix.  We then apply $n-1$ repeated matrix multiplications using the Coppersmith-Winograd~\cite{CoppersmithWinograd} algorithm to find the bound of $\mathcal{O}\left((n-1)(m+1)^{2.376} \cdot 2^{2.376m}\right)$ for this part of the process before summing the entries of the final matrix.  For the complete algorithm, if $m=n$ we have $\mathcal{O}(n^{5.376} \cdot 3^n \cdot  5.2^{n}) =\mathcal{O} (n^{5.376} \cdot 15.6^n)$.  Shortcuts may be found to improve this bound especially by utilizing symmetry to halve the calculations.

Wilf's strategy is also to partition the chessboard; in this case, into columns of size $2m\times 2$.  To do this, the set of $(m+1)2^m$ maximal arrangements on a $2m\times 2$ column indexes the rows and columns of a square matrix.  This matrix, $\Lambda_m$, called the transfer matrix, is block upper triangular and has entries which are either zero or one.  The number of maximal independent arrangements of kings for the $2m\times 2n$ chessboard is given by summing the entries of $\Lambda_{m}^{n-1}$.  After constructing the unique transfer matrix and applying $n-2$ repeated matrix multiplications, we have for $m=n$ a bound of $\mathcal{O} \left((n-2)(n+1)^{2.376} 2^{2.376n}\right) = \mathcal{O} (n^{3.376} \cdot 5.2^n)$.  In either case, Algorithm~\ref{algorithm} is an improvement on previous methods.  A Mathematica~\cite{Mathematica} program that uses Algorithm~\ref{algorithm} to calculate the values of $K(n)$ is available upon request.


\newcommand{\book}[4]{{\sc #1,} #2, #3 (#4)}
\newcommand{\preprint}[3]{{\sc #1,} #2, preprint #3.}

\end{document}